\theoremstyle{definition}
\newtheorem{thm}{Theorem}
\newtheorem{lem}[thm]{Lemma}
\newtheorem{defn}[thm]{Definition}
\newtheorem{prob}{Problem}
\title{Toward an uncountable analogue of Gallai's Theorem for colorings of the plane}
\author{Jeremy F. Alm}
\date{}
\begin{document}

\maketitle

\abstract{In this paper we prove that if $S$ is any finite configuration of points in $\mathbb{Z}^2$, then any finite coloring of $\mathbb{E}^2$ must contain uncountably many monochromatic subsets homothetic to $S$.  We extend a result of Brown, Dunfield, and Perry on 2-colorings of $\mathbb{E}^2$ to any \emph{finite} coloring of $\mathbb{E}^2$.}



  \section{Introduction}

  Let $\mathbb{E}^2$ denote the Euclidean Plane.  Many authors have  considered the question, for which finite sets $S$  in $\mathbb{E}^2$ is it true that if the points of $\mathbb{E}^2$ are colored in finitely many colors, there must be some monochromatic subset congruent to $S$? (For an extensive treatment of this and related problems, see \cite{Soifer}.) Call such a set $S$ \emph{Ramsey (for $\mathbb{E}^2$)}.  In \cite{Shader}, it is proved that all right triangles are Ramsey for two colors. No equilateral triangles are  Ramsey, for we can avoid a monochromatic equilateral triangle with side length $d$ by coloring the plane with  vertical strips of width $\sqrt{3}d/2$, alternating in red and blue.  In \cite{Erdos}, it is conjectured that  if a 2-coloring of $\mathbb{E}^2$ contains no monochromatic equilateral triangles with unit side length, then it contains monochromatic equilateral triangles of \emph{all} other sizes.

We might also consider replacing
the word \emph{congruent} above with \emph{homothetic}
(where $A$ is \emph{homothetic} to $B$ if $A$ can be mapped onto $B$ by a translation and a dilation).  Gallai's Theorem solves this problem in the affirmative---see below.

  Finally, let us further expand our  consideration to include all monochromatic subsets \emph{similar} to $S$ (where $A$ is \emph{similar} to $B$ if $A$ can be mapped onto $B$ by some sequence of translations,  dilations, rotations, and reflections).  In Theorem 4 of \cite{BDP}, the authors show that in any given  2-coloring of $\mathbb{E}^2$ there exist uncountably many $r\in\mathbb{R}^+$ so that there is a monochromatic equilateral triangle with side length $r$; hence there exist uncountably many monochromatic sets similar to any given equilateral triangle.  It is this result that we strengthen in this paper, especially in Theorem \ref{gen} below.

  Throughout, we  let $[k]=\{1,\ldots,k\}$.

  \section{Main Results}

 We begin with a definition.

 \begin{defn}
   A \emph{rectangle} in $\mathbb{E}^2$ is a set  of  four points $$\{(x,y),(x+d_1,y),(x,y+d_2),(x+d_1,y+d_2)\},$$ with $d_1,d_2>0$. A \emph{rectangle} in $\mathbb{Z}^2$ is a set  of  four points $$\{(x,y),(x+d_1,y),(x,y+d_2),(x+d_1,y+d_2)\},$$ with $x,y,d_1,d_2\in \mathbb{Z} $ and $d_1,d_2>0$.   A \emph{square} in $\mathbb{E}^2$ (respectively, $\mathbb{Z}^2$) is a rectangle in $\mathbb{E}^2$ (respectively, $\mathbb{Z}^2$) with $d_1=d_2$.
 \end{defn}

 Note that in this paper, all rectangles and squares have sides parallel to the axes. The proof of the following lemma  is left to the reader.

 \begin{lem} \label{lemma}
   Let $\mathbb{Z}\times\mathbb{Z}$ be colored in finitely many colors; then there exists some monochromatic rectangle.
 \end{lem}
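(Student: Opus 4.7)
Suppose $\mathbb{Z}^2$ is colored with $k$ colors, and fix a finite axis-aligned grid $G = \{1,\ldots,M\}\times\{1,\ldots,N\}$ inside $\mathbb{Z}^2$, where $M$ and $N$ will be chosen below. The strategy is a standard double pigeonhole: first, I want each column to contribute a same-colored pair of rows; then I want two columns to contribute the same pair in the same color.

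First, choose $N = k+1$. Then any single column of $G$ consists of $k+1$ cells colored with $k$ colors, so by pigeonhole at least two of its cells share a color. To each column $x \in \{1,\ldots,M\}$ I assign a \emph{type}, namely a pair $(\{i,j\},c)$ where $i<j$ are row indices of two cells of column $x$ having the common color $c$ (any such pair will do). The total number of possible types is at most $k\binom{k+1}{2}$.

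Second, choose $M = k\binom{k+1}{2}+1$. Then by pigeonhole two distinct columns $x_1 < x_2$ receive the same type $(\{i,j\},c)$. This means the four points $(x_1,i),(x_2,i),(x_1,j),(x_2,j)$ all have color $c$, and since $x_1<x_2$ and $i<j$ are integers, they form a rectangle in $\mathbb{Z}^2$ in the sense of the definition above.

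I do not anticipate a serious obstacle here: the argument is a finitary pigeonhole that sits comfortably inside any sufficiently large box of $\mathbb{Z}^2$, and no subtlety of the coloring is used beyond its finiteness. The only thing to be careful about is the bookkeeping in the definition of ``type'' so that the four witnessing points genuinely form an axis-aligned rectangle with positive side lengths, which is ensured by taking $x_1\neq x_2$ and $i\neq j$ as above.
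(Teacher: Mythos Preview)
Your argument is correct: the double pigeonhole on columns and then on column-types is the standard proof, and your bookkeeping is fine. The paper itself leaves this lemma to the reader, so there is no proof to compare against; your write-up is exactly the sort of argument intended.
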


Throughout this paper, we will rely heavily on factoring $\mathbb{E}^2$ into cosets, as in $^{\mathbb{E}^2}/_{\mathbb{Z}\times \mathbb{Z}}$.  Each coset $C\in {}^{\mathbb{E}^2}/_{\mathbb{Z}\times\mathbb{Z}}$ is of the form $$(x,y)+(\mathbb{Z}\times \mathbb{Z}),\qquad(x,y)\in[0,1)\times[0,1),$$ and so $C$ is an infinite grid; hence Lemma \ref{lemma} applies to $C$ (even though $C$ is not equal as a set to $\mathbb{Z}\times \mathbb{Z} $).  This yields the following result:

 \begin{thm}\label{rectangle}
  For each $r\in\mathbb{R}^+$ and for every   finite coloring of $\mathbb{E}^2$ there exist uncountably many monochromatic rectangles with side lengths that are integer multiples of $r$.
 \end{thm}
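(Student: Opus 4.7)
The plan is to reduce the statement to Lemma \ref{lemma} by exhibiting uncountably many disjoint scaled integer grids inside $\mathbb{E}^2$, and then apply the lemma to each of them.

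Fix $r\in\mathbb{R}^+$ and a finite coloring of $\mathbb{E}^2$. First I would consider the additive subgroup $r\mathbb{Z}\times r\mathbb{Z}$ of $\mathbb{R}^2$ and partition $\mathbb{E}^2$ into its cosets
\[
C_{x,y} \;=\; (x,y) + (r\mathbb{Z}\times r\mathbb{Z}), \qquad (x,y)\in[0,r)\times[0,r).
\]
Each coset $C_{x,y}$ is an infinite grid whose points are spaced by $r$ horizontally and vertically. Thus the map $(m,n)\mapsto(x+mr,\,y+nr)$ is a bijection between $\mathbb{Z}\times\mathbb{Z}$ and $C_{x,y}$, and the induced coloring on $\mathbb{Z}\times\mathbb{Z}$ (pulled back from the restriction of the given coloring to $C_{x,y}$) is still a finite coloring.

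Next I would invoke Lemma \ref{lemma} on this pulled-back coloring to obtain a monochromatic rectangle $\{(m_1,n_1),(m_2,n_1),(m_1,n_2),(m_2,n_2)\}$ in $\mathbb{Z}\times\mathbb{Z}$. Pushing this back through the bijection gives a monochromatic rectangle sitting inside $C_{x,y}$ whose two side lengths are $(m_2-m_1)r$ and $(n_2-n_1)r$, i.e., positive integer multiples of $r$, as required. So every coset contributes at least one such monochromatic rectangle.

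Finally I would observe that the cosets $C_{x,y}$ with $(x,y)\in[0,r)\times[0,r)$ are pairwise disjoint and uncountably many. Since any rectangle produced inside $C_{x,y}$ has all four of its vertices inside $C_{x,y}$, rectangles coming from distinct cosets are disjoint (in particular, distinct). This yields uncountably many monochromatic rectangles with the required side-length property. No single step looks hard: the only place that requires any care is the pullback of the coloring to $\mathbb{Z}\times\mathbb{Z}$ and the observation that Lemma \ref{lemma} is a statement about \emph{any} finite coloring of the abstract grid, so it applies to each coset individually.
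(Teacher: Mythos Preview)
Your proof is correct and follows essentially the same approach as the paper: partition $\mathbb{E}^2$ into the uncountably many cosets of $r\mathbb{Z}\times r\mathbb{Z}$ and apply Lemma~\ref{lemma} to each one. You have simply made explicit the bijection with $\mathbb{Z}\times\mathbb{Z}$ and the reason distinct cosets yield distinct rectangles, details the paper leaves to the reader.
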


 \begin{proof}
   Let $r\mathbb{Z}$ denote $\{ rn:n\in\mathbb{Z}\}$.  Then $^{\mathbb{E}^2}/_{r\mathbb{Z}\times r\mathbb{Z}}$ is a collection of cosets of the form  $$(x,y)+(r\mathbb{Z}\times r\mathbb{Z}),\qquad(x,y)\in[0,r)\times[0,r).$$ A fixed coset $C$ is a grid that is a translation of $r\mathbb{Z}\times r\mathbb{Z}$.  By  Lemma \ref{lemma}, $C$ contains a monochromatic rectangle.
 \end{proof}

  ``Gallai's Theorem", which first appeared in the literature in \cite{Rado}, refers to one of two results:

 \begin{thm}[Gallai's Theorem on $\mathbb{Z}^2$] \label{GZ}
   Let $S$ be a finite subset of $\mathbb{Z}^2$.  Then any finite coloring of $\mathbb{Z}^2$ contains a monochromatic subset homothetic to $S$.
 \end{thm}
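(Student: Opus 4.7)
The plan is to deduce Theorem \ref{GZ} from the Hales--Jewett theorem; this is the standard Gallai-from-Hales--Jewett reduction. Write $S=\{s_1,\ldots,s_m\}\subset\mathbb{Z}^2$ and fix a finite coloring $\chi\colon\mathbb{Z}^2\to[k]$. A homothetic copy of $S$ has the form $\{a+d s_1,\ldots,a+d s_m\}$ for some $a\in\mathbb{Z}^2$ and some positive integer $d$, so it suffices to produce such an $a$ and $d$ on which $\chi$ is constant.

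My approach is to lift the problem to the combinatorial cube $S^N$, viewing $S$ as an alphabet on $m$ letters. I will define $\phi\colon S^N\to\mathbb{Z}^2$ by $\phi(x_1,\ldots,x_N)=x_1+\cdots+x_N$ and set $\chi'=\chi\circ\phi$, a $k$-coloring of $S^N$. By the Hales--Jewett theorem, for $N$ sufficiently large (depending only on $m$ and $k$) there is a monochromatic combinatorial line in $S^N$. Such a line is determined by a nonempty set $T\subseteq\{1,\ldots,N\}$ of ``active'' coordinates together with fixed letters in the remaining coordinates; as the active letter $s$ varies over $S$, the map $\phi$ sends the line to $\{\,|T|\cdot s+c:s\in S\,\}$, where $c\in\mathbb{Z}^2$ is the sum of the fixed letters. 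Since $|T|$ is a positive integer, this set is a homothetic copy of $S$, and by construction it is monochromatic under $\chi$.

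The only substantive step is the invocation of Hales--Jewett; everything else is a routine translation between combinatorial lines and homothetic copies. If one wishes to avoid that black box, an alternative route is induction on $|S|$ using the multidimensional van der Waerden theorem: at each stage one uses van der Waerden to produce long arithmetic progressions of homothetic copies of a given $(|S|-1)$-point subset and then pigeonholes the last coordinate. This is longer and essentially equivalent in content, so I would present the Hales--Jewett proof as the cleanest route.
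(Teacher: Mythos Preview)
Your argument via the Hales--Jewett theorem is correct and is the standard modern derivation of Gallai's theorem: the map $\phi(x_1,\ldots,x_N)=x_1+\cdots+x_N$ sends a combinatorial line with active set $T$ and fixed-letter sum $c$ to the set $\{c+|T|\,s:s\in S\}$, which is precisely a homothetic copy $a+dS$ with $a=c$ and $d=|T|>0$, and monochromaticity transfers along $\chi'=\chi\circ\phi$ exactly as you say.

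The comparison with the paper is short: the paper does not prove Theorem~\ref{GZ} at all. It is stated as a classical result, attributed via \cite{Rado}, with a pointer to \cite{Soifer} for a proof; the paper then \emph{uses} it as a black box (to obtain monochromatic squares in coset grids). So you have supplied a full proof where the paper simply cites one. Your Hales--Jewett route is in fact the proof one finds in most modern treatments, including the reference the paper points to.

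One small quibble about your closing remark: describing the alternative as ``induction on $|S|$ using the multidimensional van der Waerden theorem'' is awkward, since the multidimensional van der Waerden theorem \emph{is} Gallai's theorem. What you sketch is really the Gallai--Witt inductive argument, which at each stage invokes the (one-dimensional) van der Waerden theorem in a product-coloring setting to extend an $(|S|-1)$-point pattern by one more point. This is a wording issue, not a mathematical error, and it does not affect your main proof.
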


 \begin{thm}[Gallai's Theorem on $\mathbb{E}^2$]
   Let $S$ be any finite subset of $\mathbb{E}^2$.  Then any finite coloring of $\mathbb{E}^2$ contains a monochromatic subset homothetic to $S$.  \end{thm}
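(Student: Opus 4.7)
My plan is to reduce Gallai's theorem on $\mathbb{E}^2$ to a higher-dimensional analogue of Theorem \ref{GZ} by restricting the coloring to the additive subgroup of $(\mathbb{R}^2,+)$ generated by the points of $S$. The idea is that positive-integer dilates of $S$ and translates by elements of this subgroup all stay within the subgroup, so the search for a monochromatic homothetic copy of $S$ can be carried out entirely inside a finitely generated lattice, where a Gallai-type theorem is available.

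Concretely, let $S = \{s_1, \ldots, s_n\} \subset \mathbb{E}^2$ and let $\chi \colon \mathbb{E}^2 \to [k]$ be the given finite coloring. I would set $G := \langle s_1, \ldots, s_n \rangle \leq (\mathbb{R}^2,+)$. Since $\mathbb{R}^2$ is torsion-free, the structure theorem for finitely generated abelian groups yields $G \cong \mathbb{Z}^d$ for some $d \leq n$; fix such an isomorphism $\phi \colon \mathbb{Z}^d \xrightarrow{\sim} G$. Pulling back $\chi|_G$ along $\phi$ produces a finite coloring $\chi \circ \phi \colon \mathbb{Z}^d \to [k]$, and $S' := \phi^{-1}(S)$ is a finite subset of $\mathbb{Z}^d$. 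Invoking the $\mathbb{Z}^d$-version of Theorem \ref{GZ} yields $\lambda \in \mathbb{Z}^+$ and $v \in \mathbb{Z}^d$ with $\lambda S' + v$ monochromatic under $\chi \circ \phi$. Pushing forward via the $\mathbb{Z}$-linear map $\phi$ gives $\phi(\lambda S' + v) = \lambda S + \phi(v)$, a monochromatic homothetic copy of $S$ in $\mathbb{E}^2$.

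The main obstacle is that Theorem \ref{GZ} is stated only for $\mathbb{Z}^2$, whereas the rank $d$ of $G$ may exceed $2$; for example, when the coordinates of the $s_i$ are $\mathbb{Q}$-linearly independent one can have $d$ as large as $n$. Fortunately the higher-dimensional Gallai theorem on $\mathbb{Z}^d$ is a classical result that admits essentially the same proof as the $d=2$ case (for instance, via the multidimensional Hales--Jewett theorem) and it appears in the original reference \cite{Rado}. Once that extension is granted, the reduction above is purely formal, and no further Euclidean-specific work is needed.
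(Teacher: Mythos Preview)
Your argument is correct. The paper itself does not supply a proof of this theorem: it is quoted as a classical result, with the reader referred to \cite{Soifer} (and implicitly \cite{Rado}) for a proof. So there is no ``paper's own proof'' to compare against here; the theorem functions as background.

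That said, your reduction is the standard one and is worth recording. Passing to the subgroup $G=\langle s_1,\dots,s_n\rangle\le(\mathbb{R}^2,+)$, using that a finitely generated torsion-free abelian group is free of some rank $d$, and then invoking Gallai on $\mathbb{Z}^d$ is exactly how the $\mathbb{E}^2$ version is typically derived from the lattice version. Your identification of the one nontrivial point---that $d$ may exceed $2$, so one genuinely needs the $\mathbb{Z}^d$ form of Theorem~\ref{GZ} rather than the $\mathbb{Z}^2$ statement as written---is accurate, and your appeal to \cite{Rado} (or Hales--Jewett) for that extension is appropriate. The push-forward step $\phi(\lambda S'+v)=\lambda S+\phi(v)$ uses only that $\phi$ is a $\mathbb{Z}$-linear isomorphism and that $\lambda\in\mathbb{Z}^+$, so the resulting set is indeed a homothetic copy of $S$. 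No gaps.
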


    (For a discussion of Gallai's Theorem as well as a proof, see \cite{Soifer}, p. ~508.)  From Theorem \ref{GZ}, we see that any finite coloring of $\mathbb{Z}\times\mathbb{Z}$, hence any of our coset ``grids", will contain a monochromatic square. Using this result, we can prove a variation on  Theorem \ref{rectangle} in which all of the monochromatic rectangles are similar to one another.  Let the \emph{aspect ratio} of a rectangle denote the ratio of a rectangle's width to its height.

 \begin{thm}\label{aspect}
   For each $r\in\mathbb{R}^+$ and for every   finite coloring of $\mathbb{E}^2$, there exist uncountably many rectangles with aspect ratio $r$.
 \end{thm}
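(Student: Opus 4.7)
The plan is to imitate the coset argument of Theorem \ref{rectangle}, but to replace the ``square'' lattice $r\mathbb{Z}\times r\mathbb{Z}$ with a stretched lattice chosen so that Gallai's Theorem on $\mathbb{Z}^2$ (Theorem \ref{GZ}) applied to a \emph{square} in $\mathbb{Z}^2$ pulls back to a rectangle of aspect ratio exactly $r$.

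More concretely, fix $r\in\mathbb{R}^+$ and consider the lattice $L:=r\mathbb{Z}\times\mathbb{Z}$. Partition $\mathbb{E}^2$ into the cosets $(x,y)+L$ with $(x,y)\in[0,r)\times[0,1)$. A fixed coset $C$ is a translate of $L$, and the linear map $\phi\colon(u,v)\mapsto(u/r,v)$ carries $C$ bijectively onto a translate of $\mathbb{Z}\times\mathbb{Z}$. Pulling back the given finite coloring of $\mathbb{E}^2$ via $\phi$ yields a finite coloring of this copy of $\mathbb{Z}^2$, so Theorem \ref{GZ} applies with $S=\{(0,0),(1,0),(0,1),(1,1)\}$.

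The conclusion of Theorem \ref{GZ} gives a monochromatic homothetic copy of the unit square inside $\phi(C)$, namely four monochromatic points of the form $\{(a,b),(a+k,b),(a,b+k),(a+k,b+k)\}$ with $k\in\mathbb{Z}^+$. Applying $\phi^{-1}$ produces four monochromatic points of $C$ forming a rectangle with horizontal side $rk$ and vertical side $k$; its aspect ratio is exactly $r$.

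Finally, distinct cosets of $L$ are disjoint, so rectangles manufactured from different cosets share no vertices and are a fortiori distinct; since the coset representatives range over the uncountable set $[0,r)\times[0,1)$, this yields uncountably many monochromatic rectangles with aspect ratio $r$. I do not anticipate a real obstacle: once one sees that a non-square lattice converts Gallai-squares in $\mathbb{Z}^2$ into fixed-aspect-ratio rectangles in $\mathbb{E}^2$, the only thing left to check is that $\phi(C)$ is indeed a translate of $\mathbb{Z}^2$ so that Theorem \ref{GZ} is applicable in each coset.
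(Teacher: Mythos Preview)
Your proof is correct and follows essentially the same approach as the paper: partition $\mathbb{E}^2$ into cosets of $r\mathbb{Z}\times\mathbb{Z}$, apply Gallai's Theorem on $\mathbb{Z}^2$ to each coset (viewed as a grid) with $S$ the unit square, and observe that the resulting monochromatic ``square'' in the grid is a rectangle of aspect ratio $r$ in $\mathbb{E}^2$. The only cosmetic difference is that you make the identification of the coset with $\mathbb{Z}^2$ explicit via the linear map $\phi$, whereas the paper simply asserts that each coset ``is a grid'' and applies Theorem~\ref{GZ} directly.
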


 \begin{proof}
   Consider $^{\mathbb{E}^2}/_{r\mathbb{Z}\times \mathbb{Z}}$.  Any coset $C$ is a grid, and hence by Theorem \ref{GZ} contains a ``square" of the form $\{ (rn,m),(r(n+d),m),(rn,m+d),(r(n+d),m+d)\}$, which corresponds to
a rectangle  with width  $rd$ and height $d$.
 \end{proof}

 In \cite{AxMan}, the authors try to find the smallest $n$ so that any 2-coloring of $[n]\times[n]$ contains a monochromatic square; they show $n\geq 13$.  In \cite{BE}, the authors prove that $n=15$ with the aid of computers.  Hence we may give the following improvement of Theorem \ref{aspect} for two colors:

 \begin{thm}
   Let $0<r<1$.  For all 2-colorings of the unit square there exist uncountably many monochromatic rectangles with aspect ratio $r$.
 \end{thm}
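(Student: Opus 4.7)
The plan is to mimic the proof of Theorem~\ref{aspect} with two modifications: refine the sublattice so that a full $15 \times 15$ subgrid fits inside the unit square on every coset, and replace the appeal to Gallai's Theorem with the quantitative bound from \cite{BE} that any 2-coloring of $[15] \times [15]$ admits a monochromatic axis-aligned square.

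Concretely, I would set $\alpha = r/15$ and $\beta = 1/15$, and factor $\mathbb{E}^2$ by the sublattice $\alpha\mathbb{Z} \times \beta\mathbb{Z}$. Each coset has a canonical representative $(s,t) \in [0,\alpha) \times [0,\beta)$, so the collection of cosets is uncountable, and distinct cosets are point-disjoint. For each such coset, the points $\{(s + i\alpha,\, t + j\beta) : 0 \le i,j \le 14\}$ all lie in $[0,1]^2$, since $s + 14\alpha < 15\alpha = r < 1$ and $t + 14\beta < 15\beta = 1$. This is a $15 \times 15$ subgrid on which the given $2$-coloring of the unit square restricts, so by \cite{BE} it contains four monochromatic points forming a grid-square; as a subset of $\mathbb{E}^2$, this is a rectangle of width $d\alpha$ and height $d\beta$ for some $d$, hence of aspect ratio $\alpha/\beta = r$.

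Since distinct cosets yield point-disjoint rectangles and there are uncountably many cosets, we obtain uncountably many monochromatic rectangles with aspect ratio $r$ inside the unit square. There is no substantive obstacle; the only care required is in the scaling, where the hypothesis $r < 1$ is precisely what guarantees that a full $15 \times 15$ block fits into $[0,1]^2$ on every coset, so that the Brown-Elkins bound can be applied uniformly across all cosets.
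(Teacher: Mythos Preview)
Your proof is correct and follows essentially the same approach as the paper: both use uncountably many translates of a $15\times 15$ grid with horizontal spacing $r/15$ and vertical spacing $1/15$ inside the unit square, then invoke the result of \cite{BE} on each grid. One small slip: the reference \cite{BE} is Bacher--Eliahou, not ``Brown--Elkins.''
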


 \begin{proof}
   Let the unit square be 2-colored, and let $0<r<1$.  We need only consider the subset $[0,r]\times[0,1]$ of the unit square. Let $A=\{ 0,\frac{1}{15},\frac{2}{15},\ldots,\frac{14}{15}\}$, and let $rA=\{ 0,\frac{r}{15},\frac{2r}{15},\ldots,\frac{14r}{15}\}$.  Consider the collection of cosets $$^{[0,r]\times[0,1]}/_{rA\times A}$$  Each coset is a 15-by-15 grid, hence by \cite{BE} contains a monochromatic ``square", which is a rectangle with aspect ratio $r$.
 \end{proof}

 Now let us consider equilateral triangles.  In \cite{BDP}, the following appears as  Theorem 4:

 \begin{quote}
For every two[-]coloring of $\mathbb{E}^2$, there exist an uncountable number of values of $r$, where $r\in \mathbb{R}^+$,  such that an equilateral triangle of side $r$ exists monochromatically.
 \end{quote}
 We extend this result to a stronger version that applies to any \emph{finite} coloring of $\mathbb{E}^2$. Let $\mathcal{T}$ be the \emph{unit equilateral triangle} $\{(0, 0),(1,0), (1/2, \sqrt{3}/2)\} $.

 \begin{thm}\label{gen}
 For any finite coloring of $\mathbb{E}^2$ there exist uncountably many $r\in\mathbb{R}^+$ such that there exist uncountably many monochromatic equilateral triangles with side length $r$ homothetic to $\mathcal{T}$.  Furthermore, for any 2-coloring of the unit square there exist uncountably many $r\in(0,1]$ such that there exist uncountably many monochromatic equilateral triangles of side length $r$ homothetic to $\mathcal{T}$.
 \end{thm}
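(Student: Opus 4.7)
The plan is to mimic the coset-decomposition strategy used in Theorems \ref{rectangle} and \ref{aspect}, but with the triangular lattice $\Lambda$ spanned by $v_1=(1,0)$ and $v_2=(1/2,\sqrt{3}/2)$ in place of $\mathbb{Z}\times\mathbb{Z}$. The key structural observation is that the linear bijection $\phi\colon\mathbb{Z}^2\to\Lambda$ sending $(i,j)\mapsto iv_1+jv_2$ preserves translations and integer dilations and sends $S_0:=\{(0,0),(1,0),(0,1)\}$ onto $\mathcal{T}$. Hence monochromatic homothetic copies of $S_0$ inside any translate of $\Lambda$ correspond exactly to monochromatic equilateral triangles homothetic to $\mathcal{T}$.

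For each $s>0$, I would decompose $\mathbb{E}^2$ into cosets of $s\Lambda$, obtaining uncountably many cosets (indexed by a fundamental parallelogram). Each coset is a translate of $s\Lambda$, hence grid-isomorphic to $\mathbb{Z}^2$ via the rescaled $\phi$. Applying Theorem \ref{GZ} to $S_0$ and the induced finite coloring of a coset $C$ produces a monochromatic homothetic copy $a+d(C)\cdot S_0$ of some color $c(C)\in[k]$; pushing forward through $\phi$ yields a monochromatic equilateral triangle of side $d(C)\cdot s$ inside $C$, homothetic to $\mathcal{T}$. Since there are only finitely many colors and countably many positive integers but uncountably many cosets, pigeonhole produces a single pair $(d_0,c_0)$ realized by uncountably many cosets, hence uncountably many monochromatic equilateral triangles of color $c_0$ and common side length $r(s):=d_0\cdot s$, each in a distinct coset.

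I expect the final fiber-counting step to be the main conceptual point of the proof. Define $r\colon\mathbb{R}^+\to\mathbb{R}^+$ by $r(s)=d(s)\cdot s$ (choosing any valid $d(s)$ per the pigeonhole above); the fiber $r^{-1}(r_0)$ is contained in the countable set $\{r_0/n:n\in\mathbb{Z}^+\}$. Since $\mathbb{R}^+$ cannot be written as a countable union of countable sets, the image of $r$ must be uncountable, and each value in the image is a witness to the first statement.

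For the furthermore clause, I would invoke a finite form of Theorem \ref{GZ}: by a standard compactness argument, for each finite $S\subset\mathbb{Z}^2$ and each $k$ there is an integer $N=N(S,k)$ such that every $k$-coloring of $[N]\times[N]$ contains a monochromatic homothetic copy of $S$. Taking $S=S_0$ and $k=2$ and choosing $s$ small enough that every coset of $s\Lambda$ meeting $[0,1]^2$ contains a contiguous $N\times N$ triangular sub-grid inside the unit square, the preceding argument yields, for every such $s$, uncountably many monochromatic triangles of side $r(s)\le(N-1)s\le 1$. The fiber-counting argument restricted to $s\in(0,s_0]$ for an appropriate $s_0>0$ then produces uncountably many admissible $r\in(0,1]$.
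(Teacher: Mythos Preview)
Your approach is essentially the paper's: pull the coloring back through the linear map sending the standard basis to the triangular-lattice basis, apply Gallai (or its finite form---you use compactness where the paper cites the explicit $N=15$ from \cite{BE}) on each coset of the scaled lattice, and then vary the scale parameter. If anything your write-up is more complete: you make explicit the pigeonhole step that fixes a common dilation $d_0$ across uncountably many cosets, and your fiber-counting argument for the uncountability of the resulting side lengths is tighter than the paper's ``letting $r$ range over $\mathbb{R}^+$'' (first claim) and its appeal to a $\mathbb{Z}$-linearly independent family of scales (second claim).
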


 \begin{proof}
    For the proofs of both claims, we will consider two copies of $\mathbb{E}^2$, which we will think of  as vector spaces---the first copy (the domain) with respect to the basis $\langle (1, 0), (0, 1)\rangle$, and  the second copy (the range) with respect to the basis $\langle(r,0),(\frac{r}{2},\frac{r\sqrt{3}}{2})\rangle$, with $r$ to be chosen later. We will also need the vector-space isomorphism $\varphi:\mathbb{E}^2\to\mathbb{E}^2$ that maps
   \begin{align*}
     (1,0) &\mapsto (r,0)\\
     &\text{and}\\
     (0,1) &\mapsto \left(\frac{r}{2},\frac{r\sqrt{3}}{2}\right).
   \end{align*}
 Notice that $\varphi$ sends a \emph{square} to a \emph{rhombus}.

   To prove the first claim, fix $r\in\mathbb{R}^+$ and suppose the range $\mathbb{E}^2$ is finitely colored by $\chi:\mathbb{E}^2\to [k]$.  Let $\chi$ induce a coloring $\chi' $ of the domain $\mathbb{E}^2$ via $\varphi^{-1}$, i.e. let $\chi' (x,y)=\chi(\varphi(x,y))$.    Each coset $C\in {}^{\mathbb{E}^2}/_{\mathbb{Z}\times\mathbb{Z}}$ contains a monochromatic square by Theorem \ref{GZ}.  The image of such a square under $\varphi$ is a rhombus with side length $rk$ for some $k\in\mathbb{Z}^+$, and acute interior angles of measure 60 degrees.  This gives  a monochromatic equilateral triangle with side length an integer multiple of $r$ homothetic to $\mathcal{T}$.  Letting $r$ range over $\mathbb{R}^+$ gives the desired result.

   To prove the second claim, let $0<r\leq\frac{2}{45}$ and let the unit square $[0,1]\times[0,1]$ be 2-colored by $\chi:[0,1]\times[0,1]\to [2]$.  Now $\varphi([0,15]\times[0,15])$ is a subset of the unit square;
   let $\chi$ induce a 2-coloring $\chi' $ of $[0,15]\times[0,15]$ via $\varphi^{-1}$, where $\chi' (x,y)=\chi(\varphi(x,y))$. Let $A=\{ 0,1,2,\ldots,14\}$. By \cite{BE}, any coset in $^{[0,15]\times[0,15]}/_{A\times A}$, which is  a 15-by-15 grid, contains a monochromatic square.  As before, the image under $\varphi$ of such a square contains a monochromatic equilateral triangle with side lengths (small) integer multiples of $r$ homothetic to $\mathcal{T}$.  Now letting $r$ range over uncountably many values in $(0,\frac{2}{45}]$ that are linearly independent over $\mathbb{Z}$ gives the desired result.
 \end{proof}


%

 Let us generalize and summarize what we have done so far:

 \begin{thm}
   Let $S$ be a finite configuration of points in the integer lattice $\mathbb{Z}\times\mathbb{Z}$.  In any finite coloring of the plane $\mathbb{E}^2$, there exist uncountably many monochromatic homothetic copies of $S$.
 \end{thm}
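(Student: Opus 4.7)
The plan is to apply the same coset-factoring device that drove the proofs of Theorems~\ref{rectangle}, \ref{aspect}, and \ref{gen}, this time with Gallai's Theorem on $\mathbb{Z}^2$ (Theorem~\ref{GZ}) replacing the rectangle and square lemmas as the combinatorial input.

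First, I would fix an arbitrary finite coloring $\chi:\mathbb{E}^2\to[k]$ and consider the coset decomposition $^{\mathbb{E}^2}/_{\mathbb{Z}\times\mathbb{Z}}$. Each coset has the form $C_{(x,y)}=(x,y)+(\mathbb{Z}\times\mathbb{Z})$ with $(x,y)\in[0,1)\times[0,1)$, and is therefore a translated copy of the integer lattice. Transporting $\chi|_{C_{(x,y)}}$ through the translation $\mathbb{Z}^2\to C_{(x,y)}$ yields a finite coloring of $\mathbb{Z}^2$, so Theorem~\ref{GZ} produces a monochromatic homothetic copy of $S$ inside $\mathbb{Z}^2$; translating that copy back into $C_{(x,y)}$ gives a monochromatic homothetic copy of $S$ lying entirely inside the coset $C_{(x,y)}$.

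It then remains to observe two things. First, a homothetic copy $\{v+ds:s\in S\}$ produced inside a coset is still a homothetic copy of $S$ when viewed as a subset of $\mathbb{E}^2$, because homothety (translation plus dilation) agrees on the coset and on the ambient plane. Second, because distinct cosets are pairwise disjoint, the monochromatic homothetic copies obtained in different cosets are themselves pairwise disjoint, hence pairwise distinct. Since the set of cosets is indexed by $[0,1)\times[0,1)$, there are uncountably many of them, and we obtain uncountably many distinct monochromatic homothetic copies of $S$.

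I do not anticipate a real obstacle: the combinatorial heart of the theorem sits inside Gallai's Theorem on $\mathbb{Z}^2$, which the proof uses as a black box, and the coset extension from $\mathbb{Z}^2$ to $\mathbb{E}^2$ has by this point become a routine move in the paper. The only point requiring any care is confirming that homothety performed within a coset agrees with homothety in $\mathbb{E}^2$, and that verification is immediate from the definitions.
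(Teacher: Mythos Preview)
Your argument is correct and matches the paper's own proof exactly: factor $\mathbb{E}^2$ into the uncountably many cosets of $\mathbb{Z}\times\mathbb{Z}$ and apply Gallai's Theorem on $\mathbb{Z}^2$ to each. Your write-up is in fact more careful than the paper's, explicitly noting that homothety within a coset is homothety in $\mathbb{E}^2$ and that disjoint cosets yield distinct copies.
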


 \begin{proof}
    Consider the collection of cosets $^{\mathbb{E}^2}/_{\mathbb{Z}\times \mathbb{Z}}$. A fixed coset $C$ is a translation of $\mathbb{Z}\times\mathbb{Z}$. By Gallai's theorem, each coset contains a monochromatic subset homothetic to $S$.
 \end{proof}

\section{Conclusions}
The results given here rely on partitions of the plane into ``nice" cosets, or cosets skewed by a linear transformation.  The proof of Theorem \ref{gen} could be adapted to any rhombus or parallelogram, hence any triangle.

\begin{prob}
  Show that for any finite coloring of $\mathbb{E}^2$ and any  4-point configuration in the plane, some color class must contain uncountably many homothetic copies of the configuration.
\end{prob}

It seems natural to conjecture that any finite configuration in the
plane---which must appear in some color class by
Gallai's Theorem on $\mathbb{E}^2$---must appear in fact uncountably many times.

\begin{prob}
  Show that for any finite coloring of $\mathbb{E}^2$ and any finite $S\subset\mathbb{E}^2$, some color class must contain uncountably many homothetic copies of $S$.
\end{prob}A solution to either of these problems would presumably require a partition of the plane more clever than the ones given here.

\section{Acknowledgements}
The author wishes to thank Jacob Manske for uncountably many productive conversations, and for reading several drafts of this paper.


\end{document}